\begin{document}

\theoremstyle{plain}
\newtheorem{theorem}{Theorem}
\newtheorem{corollary}[theorem]{Corollary}
\newtheorem{lemma}[theorem]{Lemma}
\newtheorem{proposition}[theorem]{Proposition}

\theoremstyle{definition}
\newtheorem{defn}{Definition}
\newtheorem{example}[theorem]{Example}
\newtheorem{conjecture}[theorem]{Conjecture}
\newtheorem{question}[theorem]{Question}

\theoremstyle{remark}
\newtheorem{remark}[theorem]{Remark}

\begin{center}
\vskip 1cm{\LARGE\bf 
Reciprocity Relations for Summations of Squares of Floor Functions and Fractional Parts of Fractions  \\ 
\vskip .1in

}
\vskip 1cm
\large
Damanvir Singh Binner \\
Department of Mathematics\\
Simon Fraser University \\
Burnaby, BC V5A 1S6\\
Canada \\
 dbinner@sfu.ca
\end{center}

\vskip .2in

\begin{abstract}
Given positive coprime integers $a$ and $b$ and a natural number $h$, we obtain reciprocity relations which can be used to quickly evaluate summations like $\sum_{i=1}^{h} \{\frac{ib}{a}\}^2$ and $\sum_{i=1}^{h} \lfloor \frac{ib}{a} \rfloor^2$, where $\lfloor x \rfloor$ and $\{x\}$ denote the floor function and the fractional part of $x$, respectively.
 \end{abstract}
 
\section{Introduction}
\label{Sec1}

We introduce the following notation.

\begin{itemize}
\item $T_1(a,b;h):= \sum_{i=1}^{h} \{\frac{ib}{a}\}^2$.
\item $T_2(a,b;h):= \sum_{i=1}^{h} i \lfloor \frac{ib}{a} \rfloor$.
\item $T_3(a,b;h):= \sum_{i=1}^{h} \lfloor \frac{ib}{a} \rfloor^2$.
\end{itemize}

We can reformulate these sums as follows. Let $q_i$ and $r_i$ denote the quotient and remainder when $ib$ is divided by $a$. Then 
\begin{align*}
T_1(a,b;h) &= \frac{1}{a^2} \sum_{i=1}^h r_i^2, \\
T_2(a,b;h) &= \sum_{i=1}^{h} iq_i, \\
T_3(a,b;h) &= \sum_{i=1}^{h} q_i^2.
\end{align*} 

Note that summations like $\sum_{i=1}^{h} ir_i $ and $\sum_{i=1}^{h} q_i r_i$ can be easily expressed in terms of these sums using the division algorithm. We remark in passing that in $2020$, the present author described a reciprocity relation which can be used to quickly calculate $\sum_{i=1}^{h} q_i $ and $\sum_{i=1}^{h} r_i$ (see \cite[Lemma 7]{Binner}). This reciprocity relationship is also described in Theorem \ref{Quotient} below.

In Section \ref{Sec2}, we derive reciprocity relations for $T_1(a,b;h)$. Using these, we then obtain a reciprocity relation for $T_2(a,b;h)$ in Section \ref{Sec3}. These reciprocity relations help us to easily calculate $T_1(a,b;h)$ and $T_2(a,b;h)$. In Section \ref{Sec4}, we show that $T_1(a,b;h)$ and $T_2(a,b;h)$ can be calculated in $O(\log t)$ and $O((\log t)^2)$ steps, where $t=\max(a,b)$ and by a step, we mean a basic arithmetic operation on the bits of $a$ and $b$. Further we show that $T_3(a,b;h)$ can be easily calculated using the values of $T_1(a,b;h)$ and $T_2(a,b;h)$.
In Sections \ref{Sub1} and \ref{Sub2}, we demonstrate our formulas for an example. Let $q_i$ and $r_i$ denote the quotients and remainders when $2732 \hspace{.05cm} i$ is divided by $8411$. By performing only a few steps, we show that
\begin{align*}
 \sum_{i=1}^{1221} r_i^2 &= 28850219593, \\
 \sum_{i=1}^{1221} iq_i &= 196956430, \\
 \sum_{i=1}^{1221} q_i^2 &= 63853169.
\end{align*} 

We require three main results. The first one is the following well-known result of Sylvester.

\begin{theorem}[Sylvester (1882)]
\label{Sylvester}
 If $a$ and $b$ are positive coprime numbers, the number of natural numbers 
 that cannot be expressed in the form $ax + by$  for nonnegative integers $x$ and $y$ is equal to $\frac{(a-1)(b-1)}{2}$. 
\end{theorem}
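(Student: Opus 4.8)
The plan is to prove the sharper, structural statement that the map $n \mapsto (ab-a-b)-n$ interchanges the integers representable as $ax+by$ with $x,y \ge 0$ with those that are not, and then simply to count. Throughout, write $g := ab-a-b$ for the prospective Frobenius number. I would organize the argument around two ingredients: a boundedness lemma pinning down the relevant range, and a symmetry lemma that does the real work.

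First I would establish that every integer $n \ge (a-1)(b-1) = g+1$ is representable while $g$ itself is not. For the first part, since $\gcd(a,b)=1$ there is an integer solution of $ax+by=n$, and by adding a suitable multiple of $(b,-a)$ to $(x,y)$ one may assume $0 \le x \le b-1$; then $by = n-ax \ge (g+1)-a(b-1) = 1-b > -b$, forcing $y \ge 0$. For the second, if $g = ax+by$ with $x,y \ge 0$ then $ab = a(x+1)+b(y+1)$, and coprimality forces $a \mid (y+1)$ and $b \mid (x+1)$, whence $x \ge b-1$, $y \ge a-1$, and $ax+by \ge 2ab-a-b > g$, a contradiction. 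Consequently every non-representable natural number lies in $\{1,\dots,g\}$, and the entire count takes place inside $\{0,1,\dots,g\}$, a set of exactly $g+1 = (a-1)(b-1)$ integers.

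The heart of the proof is the symmetry lemma: for every integer $n$, exactly one of $n$ and $g-n$ is representable. The key step is that, because $\gcd(a,b)=1$, the congruence $ax \equiv n \pmod{b}$ has a unique solution with $0 \le x \le b-1$, giving a unique representation $n = ax+by$ with $x$ reduced and $y \in \mathbb{Z}$; then $n$ is representable in nonnegative integers precisely when $y \ge 0$. The algebraic identity $a(b-1-x) + b(-1-y) = g-n$ exhibits $(b-1-x,\,-1-y)$ as the corresponding reduced representation of $g-n$, and since $0 \le b-1-x \le b-1$, we conclude that $g-n$ is representable exactly when $-1-y \ge 0$, i.e.\ when $y<0$. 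The conditions $y \ge 0$ and $y<0$ are complementary, proving that exactly one of $n,\,g-n$ is representable. I expect this identity together with the uniqueness of the reduced representation to be the main obstacle; everything else is bookkeeping.

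Finally I would assemble the count. The involution $n \mapsto g-n$ maps $\{0,1,\dots,g\}$ to itself and has no fixed point, because $g = ab-a-b$ is odd: as $a,b$ coprime cannot both be even, one of them is odd, which makes $ab$ and $a+b$ of opposite parity. Hence the $g+1 = (a-1)(b-1)$ integers of this range split into $(a-1)(b-1)/2$ disjoint pairs $\{n,\,g-n\}$, each containing exactly one representable and one non-representable integer by the symmetry lemma. Therefore exactly $(a-1)(b-1)/2$ of them fail to be representable; combined with the boundedness lemma (and with the fact that $0 = a\cdot 0 + b \cdot 0$ is representable), these are precisely all the non-representable natural numbers, giving the claimed count.
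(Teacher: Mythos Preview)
Your argument is correct. The boundedness step, the reduced-representation characterization, the key identity $a(b-1-x)+b(-1-y)=g-n$, and the parity check on $g$ all hold as stated, and together they give the count $(a-1)(b-1)/2$.

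Note, however, that the paper does not actually supply its own proof of this theorem: it quotes the result from Sylvester (1882) and remarks that Curran later gave a short proof via generating functions. So there is no in-paper argument to compare against. Your route---the classical involution $n\mapsto g-n$ on $\{0,1,\dots,g\}$ paired with the unique ``reduced'' representation $n=ax+by$ with $0\le x\le b-1$---is the standard elementary proof and is methodologically different from the generating-function approach alluded to in the paper. The involution argument has the advantage of being fully self-contained and of yielding the stronger structural statement (the representable and non-representable numbers in $[0,g]$ are exchanged by $n\mapsto g-n$), from which both the Frobenius number $g=ab-a-b$ and the Sylvester count fall out simultaneously; the generating-function proof, by contrast, reads the count directly off the expansion of $\dfrac{(1-x^{ab})}{(1-x^a)(1-x^b)}$ but does not by itself expose the pairing symmetry.
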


This result can be found in \cite{Sylvester82}. Moreover, Sylvester posed this as a recreational problem, and Curran \cite{Sylvester} published a short proof based on generating functions.

Let $NR(a,b)$ denotes the set of nonnegative integers nonrepresentable in terms of $a$ and $b$. That is, $NR(a,b)$ is the set of nonnegative integers $n$ that cannot be expressed in the form $ax + by$. Then, by Theorem \ref{Sylvester}, $|NR(a,b)| = \frac{(a-1)(b-1)}{2}$. In $1993$, Brown and Shiue \cite{BS} discovered the sum $S(a,b)$ of natural numbers that cannot be expressed in the form $ax+by$. 

\begin{theorem}[Brown and Shiue (1993)]
\label{Brown}
 For positive coprime numbers $a$ and $b$, $$ S(a,b) := \sum_{n \in NR(a,b)} n = \frac{1}{12}(a-1)(b-1)(2ab-a-b-1). $$
\end{theorem}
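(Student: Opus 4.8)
The plan is to follow the generating-function philosophy of Curran's proof of Theorem \ref{Sylvester}, extracting $S(a,b)$ as the value of a derivative at $X=1$. First I would record the generating function of the representable numbers. Writing $M = \{ax + by : x,y \in \mathbb{Z}_{\ge 0}\}$ for the set of representable nonnegative integers, I claim
\[
H(X) := \sum_{n \in M} X^n = \frac{1 - X^{ab}}{(1-X^a)(1-X^b)}.
\]
To see this, note that every $n \in M$ has a \emph{unique} representation $n = ai + bj$ with $i \ge 0$ and $0 \le j \le a-1$: existence follows by reducing the $y$-coefficient of any representation modulo $a$ and absorbing the excess into the $x$-coefficient (which only increases it), and uniqueness follows from $\gcd(a,b)=1$, since two such representations would force $a \mid (j - j')$ with $|j-j'| < a$. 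Summing $X^{ai+bj}$ over this parametrization factorizes as $\left(\sum_{i\ge 0} X^{ai}\right)\left(\sum_{j=0}^{a-1} X^{bj}\right)$, which is exactly the claimed product after evaluating the two geometric-type series.

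Next I would pass to the nonrepresentable numbers. Since $NR(a,b)$ is the complement of $M$ in $\mathbb{Z}_{\ge 0}$ and is finite, the polynomial
\[
G(X) := \frac{1}{1-X} - H(X) = \sum_{n \in NR(a,b)} X^n
\]
is the generating function we want. Consequently $G(1) = |NR(a,b)|$ recovers Theorem \ref{Sylvester}, and differentiating term by term gives $S(a,b) = \sum_{n \in NR(a,b)} n = G'(1)$. The whole problem thus reduces to evaluating $G$ and $G'$ at $X = 1$.

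The computational heart is that both $\frac{1}{1-X}$ and $H(X)$ are singular at $X=1$, so I cannot substitute directly; instead I would expand $G$ as a Taylor series about $X = 1$ and read off the constant and linear coefficients. Concretely, writing $G = N(X)/D(X)$ with $N(X) = (1-X^a)(1-X^b) - (1-X)(1-X^{ab})$ and $D(X) = (1-X)(1-X^a)(1-X^b)$, I would substitute $X = e^{-u}$ and use $1 - X^k = ku - \tfrac{(ku)^2}{2} + \tfrac{(ku)^3}{6} - \cdots$ to expand both $N$ and $D$ to order $u^4$ (so that, after dividing out the common factor $u^3$, the ratio is known through the linear term in $u$). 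The constant term of the resulting regular ratio is $\frac{(a-1)(b-1)}{2}$, cross-checking Sylvester, and its linear term, combined with $\frac{dX}{du}\big|_{u=0} = -1$ to convert $\frac{d}{du}$ into $\frac{d}{dX}$, yields $G'(1)$ in the form $\frac{(a^2-1)(b^2-1)}{6} - \frac{(a-1)(b-1)(a+b+1)}{4}$. The final step is purely algebraic: factoring out $(a-1)(b-1)$ and clearing denominators over $12$ collapses this to $\frac{1}{12}(a-1)(b-1)(2ab - a - b - 1)$, as claimed. I expect the main obstacle to be the careful bookkeeping of the Taylor coefficients of $N$ and $D$ to the required order, since an error at any order propagates directly into the linear coefficient that determines $S(a,b)$.
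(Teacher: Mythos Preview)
The paper does not prove Theorem~\ref{Brown}; it simply quotes the result from Brown and Shiue \cite{BS} and uses it as a black box in Lemma~\ref{Ref1}. So there is no in-paper argument to compare against. Your generating-function proof is correct: the identity $\sum_{n\in M}X^n=(1-X^{ab})/\bigl((1-X^a)(1-X^b)\bigr)$ follows exactly from the unique reduced representation $n=ai+bj$ with $0\le j\le a-1$ that you describe, $G(X)$ is therefore a polynomial with $G'(1)=S(a,b)$, and the expansion via $X=e^{-u}$ gives, after dividing out $abu^3$ from both $N$ and $D$, constant term $\tfrac{(a-1)(b-1)}{2}$ and $u$-coefficient $\tfrac{(a-1)(b-1)(a+b+1)}{4}-\tfrac{(a^2-1)(b^2-1)}{6}$, which after the sign flip from $dX/du|_{u=0}=-1$ simplifies to the stated formula. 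A minor stylistic point: expanding directly in $t=1-X$ via $1-X^k=kt-\binom{k}{2}t^2+\binom{k}{3}t^3-\cdots$ avoids the exponential substitution and the chain-rule correction at the end, but your route is equally valid.
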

For various calculations involved in our examples, we need the following reciprocity relationship proved by the present author in $2020$.
\begin{theorem}[Binner(2020)]
\label{Quotient}
 Let $a$, $b$, $d$, and $K$ be positive integers such that $b < a$, $d < a$, $\gcd(a,b)  = 1$, and $K = \left\lfloor \frac{bd}{a} \right\rfloor$. Then  $$\sum_{i=1}^{d} \left\lfloor \frac{ib}{a} \right\rfloor + \sum_{i=1}^{K} \left\lfloor \frac{ia}{b} \right\rfloor = dK.$$ 
\end{theorem}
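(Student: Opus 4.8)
The plan is to prove Theorem \ref{Quotient} by a lattice-point counting argument. Consider the rectangle $R = \{(x,y) : 1 \le x \le d, \ 1 \le y \le K\}$ of integer lattice points, which contains exactly $dK$ points. The key idea is to partition these points according to whether they lie below or above the line $ay = bx$ (equivalently $y = \frac{bx}{a}$) through the origin. Because $\gcd(a,b) = 1$ and we will check that no lattice point with $1 \le x \le d$ lies exactly on this line within the relevant range, every point falls into exactly one of the two regions.

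First I would count the points strictly below the line. For a fixed column $x = i$ with $1 \le i \le d$, the points $(i,y)$ with $1 \le y \le K$ lying below the line $ay = bx$ satisfy $y < \frac{ib}{a}$, so there are exactly $\left\lfloor \frac{ib}{a} \right\rfloor$ such points, provided this floor does not exceed $K$. Summing over $i$ gives $\sum_{i=1}^{d} \left\lfloor \frac{ib}{a} \right\rfloor$. Symmetrically, I would count the points strictly above the line by rows: for a fixed row $y = j$ with $1 \le j \le K$, the points $(x,j)$ with $1 \le x \le d$ above the line satisfy $x < \frac{ja}{b}$, giving $\left\lfloor \frac{ja}{b} \right\rfloor$ points, and summing over $j$ yields $\sum_{j=1}^{K} \left\lfloor \frac{ja}{b} \right\rfloor$. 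Adding the two counts recovers the total $dK$, which is precisely the claimed identity.

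The main obstacle, and the place where the hypotheses $b < a$, $d < a$, and $K = \lfloor \frac{bd}{a} \rfloor$ genuinely enter, is verifying that the two counts exactly partition the rectangle with no double-counting and no omissions. Specifically I must confirm three things: (i) no lattice point $(i,j)$ with $1 \le i \le d$ and $1 \le j \le K$ lies on the line $ay = bx$, so that the strict inequalities cleanly separate the two regions; (ii) the column count $\left\lfloor \frac{ib}{a} \right\rfloor$ never exceeds $K$ for $i \le d$, so that each below-line point genuinely lies within the rectangle's vertical range; and (iii) symmetrically the row count $\left\lfloor \frac{ja}{b} \right\rfloor$ never exceeds $d$ for $j \le K$. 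Point (i) follows from $\gcd(a,b)=1$: a solution would force $a \mid i$, impossible for $1 \le i \le d < a$. Points (ii) and (iii) should follow from the definition $K = \left\lfloor \frac{bd}{a} \right\rfloor$ together with the monotonicity of $\left\lfloor \frac{ib}{a} \right\rfloor$ in $i$; the boundary case $i = d$ is exactly where the choice of $K$ is calibrated. Once these boundary checks are in place, the lattice-point bijection is immediate and the identity drops out.
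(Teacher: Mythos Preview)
Your lattice-point argument is correct. The three boundary checks you flag all go through: (i) holds since $\gcd(a,b)=1$ and $1\le i\le d<a$ force $a\nmid ib$; (ii) is immediate from $\lfloor ib/a\rfloor\le\lfloor db/a\rfloor=K$; and for (iii), note first that $bd/a<b$ gives $K<b$, so $ja/b$ is never an integer for $1\le j\le K$, and moreover $bd/a$ is itself non-integral, so $K<bd/a$, hence $Ka/b<d$ and $\lfloor ja/b\rfloor\le d$ for all $j\le K$.

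The paper does not itself prove Theorem~\ref{Quotient}; it is quoted from the author's earlier work~\cite{Binner}. However, Section~\ref{Sec3} of the present paper explicitly says it generalises ``the ideas in the proof of Theorem~\ref{Quotient}'', and the method displayed there is \emph{not} lattice-point counting but a level-set / Abel-summation argument: one writes $\sum_{i=1}^{d}\lfloor ib/a\rfloor=\sum_{t=1}^{K} t\,n_t$ with $n_t=\#\{i:\lfloor ib/a\rfloor=t\}$, expresses $n_t$ as the difference $\lfloor(t+1)a/b\rfloor-\lfloor ta/b\rfloor$ (with a boundary correction at $t=K$), and then telescopes by parts. Your geometric approach is the classical Eisenstein-style proof and is arguably more transparent for this single identity; the level-set approach has the advantage, exploited in Section~\ref{Sec3}, that it generalises mechanically to weighted sums such as $\sum_i\lfloor ib/a\rfloor^2$ by simply replacing the weight $t$ with $t^2$.
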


\section{An algorithm for $T_1(a,b;h)$}
\label{Sec2}

In this section, we derive reciprocity relations which can be used to calculate $T_1(a,b;h)$.

\subsection{Reciprocity relation}

Define $$ S(a,b;h) := \left(\frac{a}{2}\right) T_1(a,b;h) +  \left(\frac{a}{2} + 1\right) \sum_{i=1}^{h} \left \lfloor \frac{ib}{a} \right \rfloor. $$  We describe reciprocity relations for $S(a,b;h)$. This leads to a method to quickly calculate $T_1(a,b;h)$ because  $ \sum_{i=1}^{h} \left \lfloor \frac{ib}{a} \right \rfloor$ can be easily calculated using the algorithm described in \cite[Section 2.3]{Binner}. We need some more notation.

\begin{itemize}
\item $n_0$ is the remainder obtained upon dividing $-b(h+1)$ by $a$. 
\item $n:=ab-a+n_0$. Note that $ab-a \leq n < ab$.
\item $H:= n_1-1$, where $n_1$ is the remainder when $-na^{-1}$ is divided by $b$.
\end{itemize}

Our approach is to calculate the number of nonnegative integer solutions $(x,y,z,u)$ of the equation $ax+by+z+u=n$ in two different ways. First, we use Theorems \ref{Sylvester} and \ref{Brown} to find the number of solutions of this equation.

\begin{lemma}
\label{Ref1}
The number of nonnegative integer solutions of the equation $ax+by+z+u=n$ is given by $$ \frac{(n+1)(n+2)}{2} + \frac{(a-1)(b-1)}{12} (2ab-a-b-6n-7). $$
\end{lemma}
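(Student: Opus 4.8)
The plan is to introduce a slack variable and count by the value of $m = ax+by$. For each $m$ with $0 \le m \le n$, a solution with $ax+by = m$ forces $z+u = n-m$, which has $n-m+1$ solutions in nonnegative integers. Hence, writing $p(m)$ for the number of nonnegative representations $m = ax+by$, the quantity we want is $N = \sum_{m=0}^{n} p(m)\,(n+1-m)$. The shape of the target answer already suggests the strategy: if every $m \in \{0,1,\dots,n\}$ were representable in exactly one way, this sum would collapse to $\sum_{m=0}^{n}(n+1-m) = \frac{(n+1)(n+2)}{2}$, which is precisely the first term. So I would write $N = \frac{(n+1)(n+2)}{2} + \sum_{m=0}^{n}(p(m)-1)(n+1-m)$ and concentrate on the correction sum.

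The key step is a recurrence for $p(m)$. Fixing a representable $m$, exactly one representation has $0 \le y < a$, and every other representation satisfies $y \ge a$; the substitution $(x,y) \mapsto (x, y-a)$ is a bijection between the representations of $m$ with $y \ge a$ and the representations of $m-ab$. This yields $p(m) - 1 = p(m-ab)$ for every representable $m$, whereas for a nonrepresentable $m$ one has $p(m)-1 = -1$ while $p(m-ab)=0$. Combining the cases (with the convention $p(k)=0$ for $k<0$) gives the uniform identity $p(m)-1 = p(m-ab) - \chi(m)$, where $\chi(m)=1$ if $m \in NR(a,b)$ and $0$ otherwise. Establishing this identity cleanly and verifying its boundary cases (in particular that $p(m)\le 1$ whenever $m<ab$) is the main obstacle; everything afterward is bookkeeping.

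Now I would exploit the range of $n$. Since $n = ab-a+n_0$ with $0 \le n_0 < a$, the excerpt already records $ab-a \le n < ab$; in particular $m-ab < 0$ for every $m$ in the summation range $0 \le m \le n$, so $\sum_{m=0}^{n} p(m-ab)(n+1-m) = 0$. Therefore the correction sum reduces to $-\sum_{m=0}^{n}\chi(m)(n+1-m) = -\sum_{m \in NR(a,b)}(n+1-m)$, where the last equality uses that the largest nonrepresentable number is the Frobenius number $ab-a-b$, which is strictly below $n$, so the entirety of $NR(a,b)$ lies in the summation range.

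Finally I would evaluate this remaining sum directly as $\sum_{m \in NR(a,b)}(n+1-m) = (n+1)\,|NR(a,b)| - S(a,b)$. Applying Theorem \ref{Sylvester} for $|NR(a,b)| = \frac{(a-1)(b-1)}{2}$ and Theorem \ref{Brown} for $S(a,b) = \frac{1}{12}(a-1)(b-1)(2ab-a-b-1)$, and substituting back, gives $N = \frac{(n+1)(n+2)}{2} - (n+1)\frac{(a-1)(b-1)}{2} + \frac{(a-1)(b-1)(2ab-a-b-1)}{12}$. A short rearrangement, using $2ab-a-b-1-6(n+1) = 2ab-a-b-6n-7$, collects the two correction terms into $\frac{(a-1)(b-1)}{12}(2ab-a-b-6n-7)$, matching the claimed formula.
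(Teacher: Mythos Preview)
Your argument is correct and follows essentially the same route as the paper: decompose $N=\sum_{m=0}^{n}p(m)(n+1-m)$, use that $p(m)\in\{0,1\}$ for $m<ab$ to reduce to $\frac{(n+1)(n+2)}{2}-\sum_{m\in NR(a,b)}(n+1-m)$, and finish with Theorems~\ref{Sylvester} and~\ref{Brown}. The only cosmetic difference is that you derive the uniqueness fact $p(m)\le 1$ for $m<ab$ via the recurrence $p(m)-1=p(m-ab)-\chi(m)$, whereas the paper simply cites it; your derivation is sound and the extra generality of the recurrence is not needed here since $n<ab$.
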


\begin{proof}
It is well-known that the equation $ax+by=n$ has either $0$ or $1$ solutions if $n<ab$ (see \cite[Lemma 2 and Lemma 4]{AT}). We view the equation $ax+by+z+u=n$ as the pair of equations $ax+by=i$ and $z+u=n-i$, as $i$ varies from $0$ to $n$. Note that the former equation has a solution only if $i \not\in NR(a,b)$. Then the required number of solutions of the equation $ax+by+z+u=n$ is given by $$  \sum_{\substack{i=0, \\ i \not\in NR(a,b)}}^n (n+1-i). $$ 
Using Theorems \ref{Sylvester} and \ref{Brown} and simplifying, we get that

\begin{align*}
& \sum_{\substack{i=0, \\ i \not\in NR(a,b)}}^n (n+1-i) \\
&=\sum_{i=0}^n (n+1-i) - \sum_{i \in NR(a,b)} (n+1-i) \\
&=  \frac{(n+1)(n+2)}{2} -(n+1)|NR(a,b)| + \sum_{i \in NR(a,b)} i \\
&= \frac{(n+1)(n+2)}{2} -(n+1)\frac{(a-1)(b-1)}{2} + \frac{1}{12}(a-1)(b-1)(2ab-a-b-1) \\
&=  \frac{(n+1)(n+2)}{2} + \frac{(a-1)(b-1)}{12} (2ab-a-b-6n-7).
 \end{align*}
 
\end{proof}

Next, we find the number of solutions of this equation using the method of generating functions. Though our method is similar in spirit to the proof of \cite[Theorem 5]{Binner}, there are several key differences and we provide all the details here for the sake of completeness. We require some more notation. 
\begin{align*}
&\alpha(a,b):= \frac{ab(a+b-2)}{2}, \\
&\beta(a,b):= \frac{ab(a-1)(b-1)}{2} + \frac{ab \left((a-1)(a-2)+(b-1)(b-2) \right)}{3}, \\
&\gamma(a,b):= \frac{2\alpha^2(a,b)-ab\beta(a,b)}{2(ab)^3}, \\
& \eta_1(a,b,h):= (h+H+1) + n\gamma(a,b) + \frac{n(n+3)}{2} \left(\frac{a+b-2}{2ab}\right) + \frac{n^3+6n^2+11n}{6ab} \\
& + \frac{(h+1)(a-1)(a-5)}{12a} + \frac{(H+1)(b-1)(b-5)}{12b} - \frac{bh(h+1)(a+2)}{4a} - \frac{aH(H+1)(b+2)}{4b}.
\end{align*}
\begin{lemma}
\label{Ref2}
Let $N$ denote the number of nonnegative integer solutions of the equation $ax+by+z+u=n$. Then $$N = S(a,b;h) + S(b,a;H) + \eta_1(a,b,h).$$
\end{lemma}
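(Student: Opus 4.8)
The plan is to read off \(N\) as a Taylor coefficient of a rational generating function and then to split that coefficient into a polynomial-in-\(n\) part and two ``periodic'' parts, the latter being exactly where \(S(a,b;h)\) and \(S(b,a;H)\) will reside. A nonnegative solution of \(ax+by+z+u=n\) corresponds to a monomial of weighted degree \(n\), so \(N\) is the coefficient of \(t^n\) in
\[
F(t)=\frac{1}{(1-t^a)(1-t^b)(1-t)^2}.
\]
I would extract \(N=[t^n]F(t)\) through the partial fraction decomposition of \(F\). Since \(\gcd(a,b)=1\), the only pole common to \(1-t^a\), \(1-t^b\), and \((1-t)^2\) is \(t=1\), where \(F\) has a pole of order \(4\); at each nontrivial \(a\)-th root of unity and each nontrivial \(b\)-th root of unity \(F\) has a simple pole, and these two families are disjoint. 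Hence \(N\) is a cubic polynomial contribution coming from \(t=1\), plus one contribution from the nontrivial \(a\)-th roots and one from the nontrivial \(b\)-th roots.

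First I would compute the principal part at \(t=1\). Writing \(t=1-s\) and expanding \(\frac{1-t}{1-t^a}=\frac{1}{a}\bigl(1+\tfrac{a-1}{2}s+\cdots\bigr)\) together with the analogous series in \(b\), the four Laurent coefficients of \(F\) at \(t=1\) become elementary symmetric expressions in this power-sum data; this is precisely what \(\alpha(a,b)\) and \(\beta(a,b)\) record, and what their combination \(\gamma(a,b)\) packages. Multiplying \((1-t)^{-k}\) by \(\binom{n+k-1}{k-1}\) and collecting, I expect to recover exactly the pure-\(n\) terms of \(\eta_1(a,b,h)\), namely \(n\gamma(a,b)+\tfrac{n(n+3)}{2}\cdot\tfrac{a+b-2}{2ab}+\tfrac{n^3+6n^2+11n}{6ab}\), together with a constant term.

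The heart of the argument is the evaluation of the two periodic contributions. For the \(a\)-th roots the contribution to \([t^n]F\) is the finite (Dedekind-type) exponential sum
\[
\frac{1}{a}\sum_{\substack{\zeta^a=1\\ \zeta\ne 1}}\frac{\zeta^{-n}}{(1-\zeta^b)(1-\zeta)^2}.
\]
Using the finite Fourier expansions of the sawtooth and of its square, that is, expressing \(\sum_\zeta \zeta^{-m}(1-\zeta)^{-1}\) and \(\sum_\zeta \zeta^{-m}(1-\zeta)^{-2}\) in terms of the residue \(m \bmod a\) and its square, this sum collapses to a linear combination of \(\sum_i (ib \bmod a)^2\) and \(\sum_i \lfloor ib/a\rfloor\), which is the definition of \(S(a,b;h)\) up to elementary terms. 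The modular bookkeeping is exactly what the conditions \(n_0\equiv -b(h+1)\pmod a\) and \(n=ab-a+n_0\) are engineered to supply: they force \(-nb^{-1}\equiv h+1\pmod a\), aligning the phase \(\zeta^{-n}\) with the cutoff \(h+1\), so that the raw full-period Fourier sum turns into the truncated sum \(\sum_{i=1}^h\) of \(S(a,b;h)\), the discrepancy being exactly the \(h\)-dependent terms \(\tfrac{(h+1)(a-1)(a-5)}{12a}-\tfrac{bh(h+1)(a+2)}{4a}\) of \(\eta_1\). The \(b\)-th roots are handled identically with \(a\) and \(b\) interchanged, the condition \(n_1\equiv -na^{-1}\pmod b\) fixing the cutoff \(H=n_1-1\); this produces \(S(b,a;H)\) and the corresponding \(H\)-terms of \(\eta_1\). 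The normalizations \(h+1\) and \(H+1\) of the two truncated sums together with the constant Laurent term assemble into the \((h+H+1)\) term, and summing the three contributions gives \(N=S(a,b;h)+S(b,a;H)+\eta_1(a,b,h)\).

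The main obstacle is this third step: converting the two finite exponential sums into the clean truncated sums \(S(a,b;h)\) and \(S(b,a;H)\). Each raw Fourier sum is naturally an average over a full residue system, where \(\sum (ib \bmod a)^2\) is a fixed constant, so the genuine truncations \(\sum_{i=1}^h\) must be produced by carefully tracking the shift imposed by \(\zeta^{-n}\) against the chosen residue cutoff; pinning down the index ranges and boundary corrections so precisely that what remains is exactly the elementary \(h\)- and \(H\)-dependent pieces of \(\eta_1\) and nothing more is where essentially all of the labor lies. As a fallback I would avoid roots of unity entirely by evaluating \(N=\sum_{ax+by\le n}(n+1-ax-by)\) directly, summing the inner variable first and then invoking the reciprocity of Theorem \ref{Quotient} to split the resulting floor sums symmetrically between the \(a\)- and \(b\)-families; this replaces the Fourier analysis by a longer but wholly elementary manipulation.
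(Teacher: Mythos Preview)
Your plan is correct and is essentially the paper's own proof: the paper also writes $N$ as the coefficient of $x^n$ in $\dfrac{1}{(1-x^a)(1-x^b)(1-x)^2}$, takes the partial fraction decomposition over the pole at $1$ and the nontrivial $a$-th and $b$-th roots of unity, reads off the polynomial-in-$n$ piece from the Laurent coefficients at $1$, and then evaluates the two root-of-unity sums by exactly the phase-alignment you describe (the paper subtracts the $x=0$ evaluation to produce a factor $1-\zeta_a^{-nk}=1-\zeta_a^{b(h+1)k}$ in the numerator and expands it against $1-\zeta_a^{bk}$ as a geometric series in $j=0,\dots,h$, which is the concrete mechanism behind your ``aligning the phase with the cutoff''). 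The only cosmetic difference is that where you invoke finite Fourier expansions of the sawtooth and its square, the paper carries out the same reduction by iterated geometric-series expansion and the orthogonality relation $\sum_{k=0}^{a-1}\zeta_a^{mk}=a\cdot[a\mid m]$, arriving at the floor-sum identity \eqref{11} and hence at $S(a,b;h)$.
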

\begin{proof}
By elementary theory of generating functions, we know that $N$ is equal to the coefficient of $x^n$ in $$ \frac{1}{(1-x^a)(1-x^b)(1-x)^2}. $$ Let $\zeta_m$ denote $e^{{\frac{2\pi i}{m}}}$.  We know that
$$(1-x^a)(1-x^b)(1-x)^2  =  (1-x)^4  \prod_{k = 1}^{a-1}(1-\zeta_a^{-k}x) \prod_{k = 1}^{b-1}(1-\zeta_b^{-k}x). $$ Since $a$ and $b$ are coprime, $1-\zeta_a^{-k}x$ and $ 1-\zeta_b^{-k}x$ are distinct for all values of $k$. Thus, we obtain the partial fraction decomposition
 \begin{equation}
 \label{Partial Fraction}
 \begin{aligned}
  \frac{1}{(1-x^a)(1-x^b)(1-x)^2} & = \frac{d_1}{1-x} + \frac{d_2}{(1-x)^2} +\frac{d_3}{(1-x)^3} + \frac{d_4}{(1-x)^4} \\
  &\quad + \sum_{k=1}^{a-1}\frac{A_k}{1-\zeta_a^{-k} x} + \sum_{k=1}^{b-1}\frac{B_k}{1-\zeta_b^{-k} x}. 
     \end{aligned}
     \end{equation}
   On comparing the coefficients of $x^n$ on both sides of \eqref{Partial Fraction}, we find
   \begin{equation}
\label{Coefficient}
    N =  d_1 + (n + 1) d_2 + \frac{(n + 2)(n + 1)}{2} d_3 + \frac{(n + 3)(n+2)(n + 1)}{6} d_4 +  \sum_{k=1}^{a-1}A_k   \zeta_a^{-nk} + \sum_{k=1}^{b-1}B_k   \zeta_b^{-nk}. 
   \end{equation}
   If we substitute $x=0$ in \eqref{Partial Fraction}, we get 
   \begin{equation}
      \label{x=0}
    1 = d_1 + d_2 + d_3 + d_4 + \sum_{k=1}^{a-1} A_k + \sum_{k=1}^{b-1} B_k.
   \end{equation}
  Upon subtracting \eqref{x=0} from \eqref{Coefficient}, we get
   \begin{equation}
   \label{Almost}
   \begin{aligned}
   N - 1 &=  nd_2+ \frac{n(n+3)}{2} d_3 + \frac{n^3+6n^2+11n}{6} d_4 \\
   &\quad - \sum_{k=1}^{a-1}A_k(1-\zeta_a^{-nk}) - \sum_{k=1}^{b-1}B_k(1-\zeta_b^{-nk}).
   \end{aligned}
    \end{equation}
   The usual procedure for finding coefficients of a partial fraction expansion gives the following equations.
   \begin{align*}
    d_4 &= \frac{1}{ab},\\
    d_3 &= \frac{a+b-2}{2ab}, \\
    d_2 &= \gamma(a,b), \\
    A_k &= \frac{1}{a(1-\zeta_a^{bk})(1-\zeta_a^{k})^2}, \\
    B_k &= \frac{1}{b(1-\zeta_b^{ck})(1-\zeta_b^{k})^2}. \\
       \end{align*}
   Substituting these back into \eqref{Almost}, we have 
   \begin{equation}
   \label{Formula'}
   N =  1 + n\gamma(a,b) + \frac{n(n+3)}{2} \left(\frac{a+b-2}{2ab}\right) + \frac{n^3+6n^2+11n}{6ab} - \left(\frac{S_1}{a} + \frac{S_2}{b} \right),
   \end{equation}
  where $$ S_1 =  \sum_{k=1}^{a-1} \frac{1-\zeta_a^{-nk}}{(1-\zeta_a^{bk})(1-\zeta_a^{k})^2} $$ and $$S_2 = \sum_{k=1}^{b-1} \frac{1-\zeta_b^{-nk}}{(1-\zeta_b^{ak})(1-\zeta_b^{k})^2}.$$ 
    Next, we find $S_1$ and $S_2$. By definition of $n$, we have $n \equiv -b(h+1) $ (mod $a$), so $\zeta_a^{-nk} = \zeta_a^{b(h+1)k}$, and thus,
 \begin{equation}
  \label{sums}
  \begin{aligned}
    S_1 &=  \sum_{k=1}^{a-1} \frac{1-\zeta_a^{b(h+1)k}}{(1-\zeta_a^{bk})(1-\zeta_a^{k})^2} \\
    &= \sum_{k=1}^{a-1} \sum_{j = 0}^{h} \frac{\zeta_a^{jbk}}{(1-\zeta_a^{k})^2}  \\
    &= \sum_{k=1}^{a-1} \sum_{j = 0}^{h} \frac{1}{(1-\zeta_a^{k})^2} - \sum_{k=1}^{a-1} \sum_{j = 0}^{h} \frac{1-\zeta_a^{jbk}}{(1-\zeta_a^{k})^2}.  
    \end{aligned}
    \end{equation}
 Note that for each $1 \leq k \leq (a-1)$, $\frac{1}{1-\zeta_a^{k}}$ satisfies $\left(1-\frac{1}{x}\right)^a = 1$. That is, for each $1 \leq k \leq (a-1)$, $\frac{1}{1-\zeta_a^{k}}$ is a root of the equation $$ax^{a-1} - {a \choose 2} x^{a-2} + {a \choose 3} x^{a-3} - \cdots = 0. $$  From there, it is easy to see that $$ \sum_{k=1}^{a-1} \frac{1}{(1-\zeta_a^{k})^2} = -\frac{(a-1)(a-5)}{12}, $$
    and thus, changing the order of summations yields
  \begin{equation}
  \label{7}
   \sum_{k=1}^{a-1} \sum_{j = 0}^{h} \frac{1}{(1-\zeta_a^{k})^2} = -\frac{(h+1)(a-1)(a-5)}{12}.  
   \end{equation} 
   Moreover, 
   
   \begin{equation}
   \label{8}
    \begin{aligned}
    \sum_{k=1}^{a-1} \sum_{j = 0}^{h} \frac{1-\zeta_a^{jbk}}{(1-\zeta_a^{k})^2} &= \sum_{k=1}^{a-1} \sum_{j = 1}^{h} \frac{1-\zeta_a^{jbk}}{(1-\zeta_a^{k})^2} \\
    &= \sum_{k=1}^{a-1} \sum_{j = 1}^{h} \sum_{l=0}^{bj-1} \frac{\zeta_a^{kl}}{1-\zeta_a^k} \\
    &= \sum_{k=1}^{a-1} \sum_{j = 1}^{h} \sum_{l=0}^{bj-1} \frac{1}{1-\zeta_a^k} - \sum_{k=1}^{a-1} \sum_{j = 1}^{h} \sum_{l=1}^{bj-1} \frac{1-\zeta_a^{kl}}{1-\zeta_a^k} \\
    &=  \sum_{j = 1}^{h} \sum_{l=0}^{bj-1} \sum_{k=1}^{a-1} \frac{1}{1-\zeta_a^k} - \sum_{k=1}^{a-1} \sum_{j = 1}^{h} \sum_{l=1}^{bj-1} \sum_{m=0}^{l-1} \zeta_a^{mk} \\
    &= \frac{(a-1)bh(h+1)}{4}  - \sum_{k=0}^{a-1} \sum_{j = 1}^{h} \sum_{l=1}^{bj-1} \sum_{m=0}^{l-1} \zeta_a^{mk} + \frac{b^2h(h+1)(2h+1)}{12} - \frac{bh(h+1)}{4} \\
     &= \frac{(a-2)bh(h+1)}{4}  - \sum_{k=0}^{a-1} \sum_{j = 1}^{h} \sum_{l=1}^{bj-1} \sum_{m=0}^{l-1} \zeta_a^{mk} + \frac{b^2h(h+1)(2h+1)}{12}.
\end{aligned}
\end{equation}
We know that $ \sum_{k=0}^{a-1} \zeta_a^{mk} \neq 0$ only if $a$ divides $m$, and in that case, the sum is $a$. Therefore, 
\begin{equation}
\label{9}
    \begin{aligned}
\sum_{k=0}^{a-1} \sum_{j = 1}^{h} \sum_{l=1}^{bj-1} \sum_{m=0}^{l-1} \zeta_a^{mk} &= \sum_{j = 1}^{h} \sum_{l=1}^{bj-1} \sum_{m=0}^{l-1} \sum_{k=0}^{a-1} \zeta_a^{mk} \\
&= a \sum_{j = 1}^{h} \sum_{l=1}^{bj-1} \left(\left \lfloor \frac{l-1}{a} \right \rfloor +1 \right) \\
&= a \sum_{j = 1}^{h} \sum_{l=1}^{bj-1} \left \lfloor \frac{l-1}{a} \right \rfloor + \frac{abh(h+1)}{2} - ah.
\end{aligned}  
\end{equation}
Next, note that $\left \lfloor \frac{l-1}{a} \right \rfloor = \left \lfloor \frac{l}{a} \right \rfloor$ unless $a$ divides $l$. Therefore, 
\begin{equation}
\label{10}
\begin{aligned}
\sum_{j = 1}^{h} \sum_{l=1}^{bj-1} \left \lfloor \frac{l-1}{a} \right \rfloor &= \sum_{j = 1}^{h} \sum_{l=1}^{bj-1} \left \lfloor \frac{l}{a} \right \rfloor -  \sum_{j = 1}^{h}  \left \lfloor \frac{bj-1}{a} \right \rfloor \\
&=  \sum_{j = 1}^{h} \sum_{l=1}^{bj-1} \left \lfloor \frac{l}{a} \right \rfloor -  \sum_{j = 1}^{h}  \left \lfloor \frac{bj}{a} \right \rfloor \\
&=  \sum_{j = 1}^{h} \sum_{l=1}^{bj} \left \lfloor \frac{l}{a} \right \rfloor - 2 \sum_{j = 1}^{h}  \left \lfloor \frac{bj}{a} \right \rfloor. \\
\end{aligned}
\end{equation}
Finally, note that for any $1 \leq j \leq h$, 
\begin{equation}
\label{11}
\begin{aligned}
\sum_{l=1}^{bj} \left \lfloor \frac{l}{a} \right \rfloor &= a\left(1+2+ \cdots +\left(\left \lfloor \frac{bj}{a} \right \rfloor - 1\right)\right) + \left \lfloor \frac{bj}{a} \right \rfloor \left(bj - a\left \lfloor \frac{bj}{a} \right \rfloor + 1 \right) \\
&= \left(bj  \left \lfloor \frac{bj}{a} \right \rfloor - \frac{a}{2} \left \lfloor \frac{bj}{a} \right \rfloor^2\right) - \left(\frac{a}{2}-1\right) \left \lfloor \frac{bj}{a} \right \rfloor \\
&= \frac{a}{2} \left \lfloor \frac{bj}{a} \right \rfloor \left(\frac{2bj}{a} - \left \lfloor \frac{bj}{a} \right \rfloor \right) - \left(\frac{a}{2}-1\right) \left \lfloor \frac{bj}{a} \right \rfloor \\
&= \frac{a}{2} \left(\frac{bj}{a} -  \left\{\frac{bj}{a}\right\} \right) \left(\frac{bj}{a} +  \left\{\frac{bj}{a}\right\} \right) - \left(\frac{a}{2}-1\right) \left \lfloor \frac{bj}{a} \right \rfloor \\
&= \frac{a}{2} \left(\left(\frac{bj}{a}\right)^2 -  \left\{\frac{bj}{a}\right\}^2 \right) - \left(\frac{a}{2}-1\right) \left \lfloor \frac{bj}{a} \right \rfloor. 
\end{aligned}
\end{equation}
Therefore, by \eqref{10} and \eqref{11}, 
\begin{equation}
\label{12}
\begin{aligned}
\sum_{j = 1}^{h} \sum_{l=1}^{bj-1} \left \lfloor \frac{l-1}{a} \right \rfloor &= \frac{b^2h(h+1)(2h+1)}{12a} - \frac{a}{2} \left(\sum_{j=1}^h \left\{ \frac{bj}{a} \right\}^2\right) -  \left(\frac{a}{2}+1\right) \sum_{j=1}^h \left \lfloor \frac{bj}{a} \right \rfloor \\
&= \frac{b^2h(h+1)(2h+1)}{12a} - S(a,b;h).
\end{aligned}
\end{equation}
From \eqref{sums}, \eqref{7},  \eqref{8}, \eqref{9} and \eqref{12}, we get that 
\begin{equation}
\label{13}
S_1= -\frac{(h+1)(a-1)(a-5)}{12} + \frac{bh(h+1)(a+2)}{4} -a \sum_{j = 1}^{h}  \left \lfloor \frac{bj}{a} \right \rfloor-ah - aS(a,b;h).
\end{equation}
Symmetrically, we get 
\begin{equation}
\label{14}
S_2= -\frac{(H+1)(b-1)(b-5)}{12} + \frac{aH(H+1)(b+2)}{4} -b \sum_{j = 1}^{H}  \left \lfloor \frac{aj}{b} \right \rfloor-bH - bS(b,a;H).
\end{equation}
The result now follows from \eqref{Formula'}, \eqref{13} and \eqref{14}.
\end{proof}
Using Lemma \ref{Ref1} and Lemma \ref{Ref2}, we get the following reciprocity relation for $S(a,b;h)$. For brevity of notation, define $$ \eta_2(a,b,h):= \frac{(n+1)(n+2)}{2} + \frac{(a-1)(b-1)(2ab-a-b-6n-7)}{12} - \eta_1(a,b,h). $$
\begin{theorem}
\label{Reci}
For given positive coprime integers $a$ and $b$, and a given natural number $h$, $S(a,b;h)$ satisfies the following reciprocity relationship: $$ S(a,b;h) + S(b,a;H) = \eta_2(a,b,h). $$ 
\end{theorem}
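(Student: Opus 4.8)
The plan is to combine the two independent evaluations of a single counting quantity. Both Lemma \ref{Ref1} and Lemma \ref{Ref2} compute $N$, the number of nonnegative integer solutions of the equation $ax+by+z+u=n$: the former via Sylvester's theorem together with the Brown--Shiue formula for the sum of nonrepresentable integers, and the latter via the partial fraction decomposition of the associated generating function and the subsequent root-of-unity manipulations. Since both expressions describe the same integer $N$, I would simply set them equal and solve for the reciprocity sum.

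Explicitly, Lemma \ref{Ref1} gives
$$N = \frac{(n+1)(n+2)}{2} + \frac{(a-1)(b-1)}{12}(2ab-a-b-6n-7),$$
while Lemma \ref{Ref2} gives $N = S(a,b;h) + S(b,a;H) + \eta_1(a,b,h)$. Equating these two right-hand sides and isolating the reciprocity sum on one side yields
$$S(a,b;h) + S(b,a;H) = \frac{(n+1)(n+2)}{2} + \frac{(a-1)(b-1)(2ab-a-b-6n-7)}{12} - \eta_1(a,b,h).$$
The expression on the right is exactly $\eta_2(a,b,h)$ as defined immediately before the theorem statement, so the claimed identity $S(a,b;h) + S(b,a;H) = \eta_2(a,b,h)$ follows at once.

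There is essentially no obstacle left at this stage: all of the substantive work---the two-way counting argument and, in particular, the delicate generating-function and root-of-unity computations---has already been discharged in establishing Lemmas \ref{Ref1} and \ref{Ref2}. The only thing to verify is that the expression obtained after isolating the reciprocity sum agrees term-for-term with the definition of $\eta_2(a,b,h)$, and this is immediate because $\eta_2$ was \emph{defined} precisely as that difference. Thus the proof reduces to a one-line double-counting conclusion, and no further simplification is required.
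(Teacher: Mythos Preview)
Your proposal is correct and matches the paper's own argument exactly: the paper states that Theorem~\ref{Reci} follows immediately from Lemmas~\ref{Ref1} and~\ref{Ref2}, and then defines $\eta_2(a,b,h)$ precisely so that equating the two expressions for $N$ yields the stated identity. There is nothing to add.
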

Next, we describe our algorithm for calculating $S(a,b;h)$.
\begin{enumerate}
\item Suppose $a>b$. We express $S(a,b;h)$ in terms of $S(b,a;H)$ using Theorem \ref{Reci}.  
\item Suppose $b \geq a$.  Then, $b=aq+r$ for some $q \geq 1$ and $r < a$. Then, it is easy to observe that 
\begin{equation}
\label{Div}
 S(a,b;h) =  S(a,r;h) + \frac{qh(h+1)(a+2)}{4}. 
 \end{equation}
\item We keep repeating Steps $1$ and $2$ until we are done.
\end{enumerate}

\subsection{An example}
\label{Sub1}
Suppose we want to calculate the value of $T_1(8411,2732;1221)$, that is $$ \sum_{i=1}^{1221} \left\{\frac{2732 \hspace{.05cm} i}{8411}\right\}^2. $$ First, we evaluate $S(8411,2732;1221)$ using the above algorithm. Set $a=8411$, $b=2732$ and $h=1221$ 
in Theorem \ref{Reci} to get 
\begin{equation}
\label{Eqn 31}
 S(8411,2732;1221) + S(2732,8411;2335) =  \frac{5521952154451967}{441901}. 
 \end{equation}
  Using \eqref{Div}, we get  
  \begin{equation}
\label{Eqn 32} 
  S(2732,8411;2335) = S(2732,215;2335) + 11184575280.
  \end{equation}
   Using Theorem \ref{Reci}, we get 
 \begin{equation}
\label{Eqn 33}
S(2732,215;2335) + S(215,2732;31) = \frac{43105956866071}{146845}. 
\end{equation}
  Using \eqref{Div}, we get 
   \begin{equation}
  \label{Eqn 34}
  S(215,2732;31) = S(215,152;31) + 645792. 
  \end{equation}
   Using Theorem \ref{Reci}, we get 
   \begin{equation}
    \label{Eqn 35}
    S(215,152;31) + S(152,215;129) = \frac{62027530983}{65360}. 
    \end{equation}
     Using \eqref{Div}, we get 
     \begin{equation}
   \label{Eqn 36}
      S(152,215;129) = S(152,63;129) + 645645.
      \end{equation}
      Using Theorem \ref{Reci}, we get 
      \begin{equation}
   \label{Eqn 37}
       S(152,63;129) + S(63,152;9) = \frac{1719655381}{6384}. 
       \end{equation}
      Using \eqref{Div}, we get 
      \begin{equation}
   \label{Eqn 37}
       S(63,152;9) = S(63,26;9) + 2925. 
       \end{equation}
        Using Theorem \ref{Reci}, we get 
        \begin{equation}
   \label{Eqn 38}
         S(63,26;9) + S(26,63;21) = \frac{9093619}{1092}. 
         \end{equation}
          Using \eqref{Div}, we get 
          \begin{equation}
   \label{Eqn 39}
   S(26,63;21) = S(26,11;21) + 6468.           
  \end{equation}
          Using Theorem \ref{Reci}, we get 
  \begin{equation}
  \label{Eqn 40} 
   S(26,11;21) + S(11,26;1) = \frac{757997}{572}. 
   \end{equation}
           Finally, it is easy to see that 
         \begin{equation}
\label{Eqn 41}  
            S(11,26;1) = \frac{151}{11}. 
            \end{equation}
From \eqref{Eqn 31} to \eqref{Eqn 41}, we get that $$ S(8411,2732;1221) = \frac{658946167630}{647}. $$ That is, 
\begin{equation}
\label{Eqn 50}  
 \left(\frac{8411}{2}\right) \sum_{i=1}^{1221} \left\{\frac{2732 \hspace{.05cm} i}{8411}\right\}^2  +  \left(\frac{8413}{2}\right) \sum_{i=1}^{1221} \left \lfloor \frac{2732 \hspace{.05cm} i}{8411} \right \rfloor=  \frac{658946167630}{647}. 
  \end{equation}
  
The summation $\sum_{i=1}^{1221} \left \lfloor \frac{2732 \hspace{.05cm} i}{8411} \right \rfloor$  can be easily calculated using the algorithm described in \cite[Section 2.3]{Binner}. However, we provide all the details here for the sake of completeness. 

In order to solve the first sum, we apply Theorem \ref{Quotient} to get 
\begin{equation}
\label{Eqn 17}
 \sum_{i=1}^{1221} \left\lfloor \frac{2732  \hspace{.05cm} i}{8411} \right\rfloor = 483516 -  \sum_{i=1}^{396} \left\lfloor \frac{8411  \hspace{.05cm} i}{2732} \right\rfloor.   
\end{equation}
Then, by the division algorithm,
\begin{equation}
\label{Eqn 18}
\begin{aligned}
\sum_{i=1}^{396} \left\lfloor \frac{8411  \hspace{.05cm} i}{2732}  \right\rfloor&= \sum_{i=1}^{396} \left(3i + \left\lfloor \frac{215i}{2732} \right\rfloor \right) \\
&= 235818 + \sum_{i=1}^{396} \left\lfloor \frac{215i}{2732} \right\rfloor.  
\end{aligned}
\end{equation}
Repeated applications of Theorem \ref{Quotient}, followed by the division algorithm, give the following equations. 
\begin{equation}
\label{Eqn 19}
\begin{aligned}
 \sum_{i=1}^{396} \left\lfloor \frac{215i}{2732} \right\rfloor &= 12276 -   \sum_{i=1}^{31} \left\lfloor \frac{2732 \hspace{.05cm} i}{215} \right\rfloor \\
  &=  6324 -  \sum_{i=1}^{31} \left\lfloor \frac{152i}{215} \right\rfloor,
\end{aligned}
\end{equation}
\begin{equation}
\label{Eqn 20}
\begin{aligned}
   \sum_{i=1}^{31} \left\lfloor \frac{152i}{215} \right\rfloor, &= 651-   \sum_{i=1}^{21} \left\lfloor \frac{215i}{152} \right\rfloor  \\
  &= 420 -   \sum_{i=1}^{21} \left\lfloor \frac{63i}{152} \right\rfloor,  
\end{aligned}
\end{equation}
\begin{equation}
\label{Eqn 21}
\begin{aligned}
   \sum_{i=1}^{21} \left\lfloor \frac{63i}{152} \right\rfloor  &=  168 -  \sum_{i=1}^{8}   \left\lfloor \frac{152i}{63} \right\rfloor  \\
   &= 96 -  \sum_{i=1}^{8}   \left\lfloor \frac{26i}{63} \right\rfloor,       
 \end{aligned}
 \end{equation}
 \begin{equation}
 \label{Eqn 22}
 \begin{aligned}
        \sum_{i=1}^{8}   \left\lfloor \frac{26i}{63} \right\rfloor &=  24 -  \sum_{i=1}^{3} \left\lfloor \frac{63i}{26} \right\rfloor \\
        &= 12 -  \sum_{i=1}^{3} \left\lfloor \frac{11i}{26} \right\rfloor,
      \end{aligned}
      \end{equation}
and
      \begin{equation}
      \label{Eqn 23}
      \begin{aligned}
       \sum_{i=1}^{3} \left\lfloor \frac{11i}{26} \right\rfloor  &= 3 - \sum_{i=1}^{1} \left\lfloor \frac{26i}{11} \right\rfloor \\
       &=1.
\end{aligned}
\end{equation}
From \eqref{Eqn 17} to \eqref{Eqn 23}, we get 
 \begin{equation}
 \label{Eqn 100}
  \sum_{i=1}^{1221} \left\lfloor \frac{2732  \hspace{.05cm} i}{8411} \right\rfloor = 241709. 
   \end{equation}
   From \eqref{Eqn 50} and \eqref{Eqn 100}, we get that 
\begin{equation}
\label{Eqn 500} 
T_1(8411,2732;1221) = \sum_{i=1}^{1221} \left\{\frac{2732 \hspace{.05cm} i}{8411}\right\}^2 = \frac{2219247661}{5441917}. 
\end{equation}
Multiplying both sides of this equation by $8411^2$, the above statement is equivalent to $$ \sum_{i=1}^{1221} r_i^2 = 28850219593, $$ where $r_i$ is the remainder when $2732 \hspace{.05cm} i$ is divided by $8411$.

\section{An algorithm for $T_2(a,b;h)$ and $T_3(a,b;h)$}
\label{Sec3}
Recall our notation from Section \ref{Sec1}.
\begin{itemize}
\item $T_1(a,b;h)= \sum_{i=1}^{h} \{\frac{ib}{a}\}^2$.
\item $T_2(a,b;h)= \sum_{i=1}^{h} i \lfloor \frac{ib}{a} \rfloor$.
\item $T_3(a,b;h)= \sum_{i=1}^{h} \lfloor \frac{ib}{a} \rfloor^2$.
\end{itemize}
Note that 
\begin{align*}
 T_3(a,b;h) - T_1(a,b;h) &= \sum_{i=1}^{h} \frac{ib}{a}\left(\left \lfloor  \frac{ib}{a} \right \rfloor -  \left\{\frac{ib}{a}\right\} \right) \\
 &= \sum_{i=1}^{h} \frac{ib}{a}\left(2 \left \lfloor  \frac{ib}{a} \right \rfloor -  \frac{ib}{a} \right) \\
 &= \frac{2b}{a} T_2(a,b;h) - \frac{b^2h(h+1)(2h+1)}{6a^2}.
 \end{align*}
Thus, we get the following relationship between $T_1(a,b;h)$, $T_2(a,b;h)$ and $T_3(a,b;h)$.
\begin{equation}
\label{Imp*}
T_3(a,b;h) =  T_1(a,b;h) + \frac{2b}{a} T_2(a,b;h)- \frac{b^2h(h+1)(2h+1)}{6a^2}.
\end{equation}

\subsection{Reciprocity relation for $T_2(a,b;h)$}

Next, we use another method to calculate $T_3(a,b;h)$. We generalize the ideas in the proof of Theorem \ref{Quotient} described in \cite{Binner}. For the sake of completeness, we provide all the details here. Let $h'$ denote the quantity $\left \lfloor \frac{bh}{a} \right \rfloor$. Then, $$ T_3(a,b;h) = \sum_{t=1}^{h'} t^2n_t, $$  where $n_t$ is the number of $i$ such that $1 \leq i \leq h$ and $\left\lfloor \frac{ib}{a} \right\rfloor = t$. Clearly, if $ t < h'$, then $$n_t = \left \lfloor \frac{(t+1)a}{b} \right \rfloor - \left \lfloor \frac{ta}{b} \right \rfloor; $$  if $t=h'$, then $$n_t  = h - \left \lfloor \frac{h'a}{b} \right \rfloor.$$ Therefore, 
\begin{align*}
\sum_{i=1}^{h}\left \lfloor \frac{ib}{a} \right \rfloor^2 &= \sum_{t=1}^{h'-1} \left(\left\lfloor\frac{(t+1)a}{b}\right\rfloor - \left\lfloor \frac{ta}{b} \right\rfloor \right) t^2 + \left(h-\left\lfloor \frac{h'a}{b}\right\rfloor \right)h'^2 \\
&= \sum_{t=1}^{h'-1} \left( t^2 \left \lfloor \frac{(t+1)a}{b} \right \rfloor - (t-1)^2 \left \lfloor \frac{ta}{b} \right \rfloor \right)  - \sum_{t=1}^{h'-1} (2t-1) \left \lfloor \frac{ta}{b} \right \rfloor + \left(h-\left\lfloor \frac{h'a}{b}\right\rfloor \right)h'^2 \\
&= (h'-1)^2 \left \lfloor \frac{h'a}{b} \right \rfloor - \sum_{t=1}^{h'-1} (2t-1) \left \lfloor \frac{ta}{b} \right \rfloor + hh'^2 -h'^2 \left\lfloor \frac{h'a}{b}\right\rfloor \\
&= hh'^2 -  \sum_{t=1}^{h'} (2t-1) \left \lfloor \frac{ta}{b} \right \rfloor \\
&= hh'^2 -  2T_2(b,a;h') + \sum_{t=1}^{h'} \left \lfloor \frac{ta}{b} \right \rfloor.
\end{align*}

Thus, we obtain the following relation: 
\begin{equation}
\label{Imp}
 T_3(a,b;h) = hh'^2 -  2T_2(b,a;h') + \sum_{t=1}^{h'} \left \lfloor \frac{ta}{b} \right \rfloor. 
 \end{equation}
 
 Using \eqref{Imp*} and \eqref{Imp}, we get the following reciprocity relation for $T_2(a,b;h)$:
 \begin{equation}
 \label{Imp**}
 T_2(a,b;h) + \frac{a}{b} T_2(b,a;h') = \frac{ahh'^2}{2b} + \frac{a}{2b} \left(\sum_{t=1}^{h'} \left \lfloor \frac{ta}{b} \right \rfloor \right)  -\frac{a}{2b}T_1(a,b;h) + \frac{bh(h+1)(2h+1)}{12a}.
 \end{equation}
 
 We describe our algorithm for calculating $T_2(a,b;h)$. The quantity $T_3(a,b;h)$ can then be easily obtained from $T_1(a,b;h)$ and $T_2(a,b;h)$ using \eqref{Imp*}. Our algorithm for  $T_2(a,b;h)$ is as follows:
 
 \begin{enumerate}
\item Suppose $a>b$. We express $T_2(a,b;h)$ in terms of $T_2(b,a;h')$  using \eqref{Imp**}.  Note that the expression involves the terms $T_1(a,b;h)$ and $\sum_{t=1}^{h'} \left \lfloor \frac{ta}{b} \right \rfloor$. The former can be calculated using the algorithm in Section \ref{Sec2} and the latter can be calculated using Theorem \ref{Quotient}, as described in the algorithm in \cite[Section 2.3]{Binner}.
\item Suppose $b \geq a$.  Then, $b=aq+r$ for some $q \geq 1$ and $r < a$. Then, it is easy to observe that 
\begin{equation}
\label{Eqn 200}
 T_2(a,b;h) =  T_2(a,r;h) + \frac{qh(h+1)(2h+1)}{6}. 
 \end{equation}
\item We keep repeating Steps $1$ and $2$ until we are done.
\end{enumerate}

\subsection{An example}
\label{Sub2}
We return to our example $a=8411$, $b=2732$ and $h=1221$. 
Using our algorithm for $T_1(a,b;h)$ in Section \ref{Sec2} and the algorithm for $\sum_{i=1}^{h'} \left \lfloor \frac{ia}{b} \right \rfloor$ in \cite[Section 2.3]{Binner}, we easily obtain 
\begin{align*}
 T_1(8411,2732;1221) &= \frac{2219247661}{5441917}, \\
 \sum_{i=1}^{396} \left \lfloor \frac{8411 \hspace{.05cm}i}{2732} \right \rfloor &= 241807. 
 \end{align*}
Then using \eqref{Imp**}, 
\begin{equation}
\label{Eqn 202}
 T_2(8411,2732;1221) + \frac{8411}{2732} T_2(2732,8411;396) = \frac{1075804292917}{2732}. 
 \end{equation}
From \eqref{Eqn 200}, we get 
\begin{equation}
\label{Eqn 203}
 T_2(2732,8411;396) = T_2(2732,215;396) + 62334558.
\end{equation}
Using our algorithm for $T_1(a,b;h)$ in Section \ref{Sec2} and the algorithm for $\sum_{i=1}^{h'} \left \lfloor \frac{ia}{b} \right \rfloor$ in \cite[Section 2.3]{Binner}, we easily obtain 
\begin{align*}
 T_1(2732,215;396) &= \frac{489539849}{3731912}, \\
 \sum_{i=1}^{31} \left \lfloor \frac{2732 \hspace{.05cm}i}{215} \right \rfloor &= 6287.
\end{align*}
Then using \eqref{Imp**}, 
\begin{equation}
\label{Eqn 205}
 T_2(2732,215;396) + \frac{2732}{215}T_2(215,2732;31) = \frac{704030131}{215}. 
\end{equation}
From \eqref{Eqn 200}, we get 
\begin{equation}
\label{Eqn 206}
 T_2(215,2732;31) = T_2(215,152;31) + 124992. 
\end{equation}
Using our algorithm for $T_1(a,b;h)$ in Section \ref{Sec2} and the algorithm for $\sum_{i=1}^{h'} \left \lfloor \frac{ia}{b} \right \rfloor$ in \cite[Section 2.3]{Binner}, we easily obtain 
\begin{align*}
 T_1(215,152;31) &= \frac{483579}{46225}, \\
  \sum_{i=1}^{21} \left \lfloor \frac{215 \hspace{.05cm}i}{152} \right \rfloor &= 316.
\end{align*}
Then using \eqref{Imp**}, 
\begin{equation}
\label{Eqn 208}
 T_2(215,152;31) + \frac{215}{152} T_2(152,215;21) = \frac{515533}{38}. 
 \end{equation}
From \eqref{Eqn 200}, we get 
 \begin{equation}
\label{Eqn 209}
 T_2(152,215;21) = T_2(152,63;21)+ 3311. 
\end{equation}
 Using our algorithm for $T_1(a,b;h)$ in Section \ref{Sec2} and the algorithm for $\sum_{i=1}^{h'} \left \lfloor \frac{ia}{b} \right \rfloor$ in \cite[Section 2.3]{Binner}, we easily obtain 
 \begin{align*}
 T_1(152,63;21) &= \frac{164511}{23104}, \\
  \sum_{i=1}^{8} \left \lfloor \frac{152 \hspace{.05cm}i}{63} \right \rfloor &= 83.
 \end{align*}
Then using \eqref{Imp**}, 
 \begin{equation}
\label{Eqn 211}
 T_2(152,63;21) + \frac{152}{63} T_2(63,152;8) = \frac{151139}{63}. 
 \end{equation}
From \eqref{Eqn 200}, we get 
 \begin{equation}
\label{Eqn 212}
 T_2(63,152;8) = T_2(63,26;8) + 408. 
 \end{equation}
 Using our algorithm for $T_1(a,b;h)$ in Section \ref{Sec2} and the algorithm for $\sum_{i=1}^{h'} \left \lfloor \frac{ia}{b} \right \rfloor$ in \cite[Section 2.3]{Binner}, we easily obtain 
 \begin{align*}
 T_1(63,26;8) &= \frac{3233}{1323}, \\
 \sum_{i=1}^{3} \left \lfloor \frac{63 \hspace{.05cm}i}{26} \right \rfloor &= 13.
 \end{align*}
Then using \eqref{Imp**}, 
 \begin{equation}
\label{Eqn 214}
 T_2(63,26;8) + \frac{63}{26} T_2(26,63;3) = \frac{3695}{26}. 
\end{equation}
 From \eqref{Eqn 200}, we get 
 \begin{equation}
\label{Eqn 215}
 T_2(26,63;3) = T_2(26,11;3) + 28. 
 \end{equation}
Using our algorithm for $T_1(a,b;h)$ in Section \ref{Sec2} and the algorithm for $\sum_{i=1}^{h'} \left \lfloor \frac{ia}{b} \right \rfloor$ in \cite[Section 2.3]{Binner}, we easily obtain 
 \begin{align*}
 T_1(26,11;3) &= \frac{327}{338}, \\
 \sum_{i=1}^{1} \left \lfloor \frac{26 \hspace{.05cm}i}{11} \right \rfloor &= 2.
 \end{align*}
Then using \eqref{Imp**}, 
 \begin{equation}
\label{Eqn 216}
 T_2(26,11;3) + \frac{26}{11} T_2(11,26;1) = \frac{85}{11}. 
\end{equation}
 From \eqref{Eqn 200}, we get 
 \begin{equation}
\label{Eqn 217}
 T_2(11,26;1) = T_2(11,4;1) + 2. 
 \end{equation}
It is easy to see that 
 \begin{equation}
\label{Eqn 218}
 T_2(11,4;1) = 0.
\end{equation}
 From \eqref{Eqn 202} to \eqref{Eqn 218}, it follows that 
 \begin{equation}
\label{Eqn 220}
 T_2(8411,2732;1221) = \sum_{i=1}^{1221} i \left \lfloor \frac{2732 \hspace{.05cm} i}{8411} \right \rfloor = 196956430. 
 \end{equation}
                         
Finally, we use \eqref{Imp*} to calculate $T_3(8411,2732;1211)$ from the values of $T_1(8411,2732;1211)$ and $T_1(8411,2732;1211)$ obtained in \eqref{Eqn 500} and \eqref{Eqn 220}, respectively. 
\begin{align*}
T_3(8411,2732;1221) &= T_1(8411,2732;1221)  + \frac{5464}{8411} T_2(8411,2732;1221) - \frac{348800520350128}{5441917} \\
&=  \frac{2219247661}{5441917} + \frac{5464}{8411} \times 196956430 - \frac{348800520350128}{5441917} \\
&= 63853169. 
\end{align*}
That is, $$ T_3(8411,2732;1221) = \sum_{i=1}^{1221} \left \lfloor \frac{2732 \hspace{.05cm} i}{8411} \right \rfloor^2  =  63853169. $$

\section{Efficiency of the algorithms}
\label{Sec4}
We compare the reciprocity relation in Theorem \ref{Reci} with that in Theorem \ref{Quotient}. The analysis in \cite[Section 2.5]{Binner} shows that $S(a,b;h)$ can be calculated in $O(\log t)$ steps where $t = \max(a,b)$. The quantity $\sum_{i=1}^{h} \left \lfloor \frac{ib}{a} \right \rfloor$ can also be calculated in $O(\log t)$ steps, as described in  \cite[Section 2.5]{Binner}. Therefore, $T_1(a,b;h) = \sum_{i=1}^{h} \{\frac{ib}{a}\}^2$ can be calculated in $O(\log t)$ steps. 

Consider the reciprocity relation for $T_2(a,b;h)$ in \eqref{Imp**}. Note that this is similar to the ones above except that in each step, we need to calculate $T_1(a,b;h)$ and $\sum_{i=1}^{h'} \left \lfloor \frac{ia}{b} \right \rfloor$, both of which require $O(\log t)$ steps. Thus, in order to calculate $T_2(a,b;h)$, we need to apply the reciprocity relation $O(\log t)$ times and each time, we need to perform $O(\log t)$ steps. Hence, the number of steps required for calculating $T_2(a,b;h) = \sum_{i=1}^{h} i \lfloor \frac{ib}{a} \rfloor$ is $O((\log t)^2)$. 

The quantity $T_3(a,b;h)$ can be obtained from $T_1(a,b;h)$ and $T_2(a,b;h)$ using \eqref{Imp*}. Therefore, the number of steps required for calculating $T_3(a,b;h) = \sum_{i=1}^{h} \lfloor \frac{ib}{a} \rfloor^2$ is also $O((\log t)^2)$. 

\section{Acknowledgements}
 I want to thank the Maths Department at SFU for providing me various awards and fellowships which help me conduct my research.

\end{document}